\newtheorem{Th}{Theorem}[section] \newtheorem{Cor}[Th]{Corollary}
\newtheorem{Lem}[Th]{Lemma} \newtheorem{Prop}[Th]{Proposition}
\numberwithin{equation}{section}
\renewcommand{\theequation}{\thesection.\arabic{equation}}
\def\aut#1{\operatorname{Aut}(#1)}
\def\inn#1{\operatorname{Inn}(#1)}
\def\ia#1{\operatorname{IA}(#1)}
\def\iat{\operatorname{IA}}
\def\gl#1{\operatorname{GL}(#1)}
\def\End#1{\operatorname{End}(#1)}
\def\iat{\operatorname{IA}}
\def\id{\operatorname{id}}
\def\inv{{}^{-1}}
\def\rank{\operatorname{rank}}
\def\str#1{\langle#1\rangle}
\def\f{\varphi}
\def\s{\sigma}
\def\cB{{\mathcal B}}
\def\cD{{\mathcal D}}
\def\N{\mathbf N}
\def\Z{\mathbf Z}
\renewcommand{\ge}{\geqslant}
\begin{document}

\setcounter{page}{113}

\copyrightinfo{2002}{American Mathematical Society}

\begin{abstract}
Considering a particular case of a problem
posed by S.~Shelah, we prove that the automorphism
group of an infinitely generated free nilpotent
group of cardinality $\lambda$ first-order interprets
the full second-order theory of the set $\lambda$
in empty language.
\end{abstract}

\title[Automorphism groups of free nilpotent groups]{On the logical
strength of the automorphism groups of free nilpotent groups}
\author{Vladimir Tolstykh}
\address{Department of Mathematics \\ Istanbul Bilgi University\\
Ku\c{s}tepe 80310 \c{S}i\c{s}li-Istanbul \\ Turkey}
\email{vladimirt\symbol{64}bilgi.edu.tr}
\subjclass[2000]{Primary: 03C60; Secondary: 20F19, 20F28}
\keywords{Automorphism groups, free groups, nilpotent groups, interpretations,
first-order theories, high-order theories}
\maketitle

\section{Introduction}

In his paper \cite{ShCler} of 1976, S.~Shelah suggested
a general program of the study of the logical
strength of first-order theories of the
automorphism groups of free algebras.
Recently he has again attracted attention
to that program in his survey \cite{ShMathJap}.
Namely, the problem 3.14 from \cite{ShMathJap}
asks for which varieties $\text{\bf V}$ of
algebras, letting $F_\lambda$ for a free
algebra in $\text{\bf V}$ with $\lambda \ge \aleph_0$
free generators, can we syntactically
interpret in the first-order theory
of $\aut{F_\lambda}$ the full second-order
theory of the set $\lambda$ in empty
language (possibly
for sufficiently large cardinals $\lambda$).
Recall that a theory $T_0$ in a logic
$\mathcal L_0$ is said to be {\it syntactically
interpretable} in a theory $T_1$ in
a logic $\mathcal L_1$ if there is
a mapping $\chi \to \chi^*$ from
the set of all $\mathcal L_0$-sentences
to the set of all $\mathcal L_1$-sentences
such that
$$
\chi \in T_0 \iff \chi^* \in T_1.
$$

It should be pointed out that one the main results of
\cite{ShCler} states that for {\it any} variety of
algebras $\text{\bf V}$ the first-order theory of the
{\it endomorphism semi-group} $\End{F_\lambda}$
syntactically interprets
$\operatorname{Th}_2(\lambda)$ provided that a
cardinal $\lambda$ is greater than or
equal to the power of the language of $\text{\bf V}.$
The situation with the automorphism groups
seems to be more difficult and the reason is
obvious: despite being complicated
in many cases, the endomorphism semi-groups of
free algebras, as Shelah's analysis in \cite{ShCler}
demonstrates, can be viewed as combinatorial
objects.

There exist only a few examples of varieties for
which the Shelah's problem is completely
investigated:
some varieties have the desired property
(for instance, the variety of all vector spaces
over an arbitrary division ring and
the variety of all groups \cite{ToAPAL,ToJLM2}), some do not
(the variety of all sets with no structure
\cite{Sh1,Sh2}, the automorphism groups
of free algebras are here {\it symmetric
groups}). To the best of the author's knowledge, there
are no general results on the subject (however,
Shelah introduces in \cite[\S \ 3]{ShMathJap} a wide class
of so-called Aut-decomposable varieties, which
are in many ways analogous to the variety of all sets).

The purpose of the present paper is to prove that the
automorphism groups $\aut{F_\lambda}$ of
free groups $F_\lambda$ in all varieties of nilpotent
groups $\mathfrak N_s$ with $s \ge 2$ are logically
strong enough to interpret by means of first-order
logic the full second-order theory of $\lambda$
for all infinite $\lambda.$
We also consider a number of related questions;
it is proved, in particular, that the first-order
theory of the automorphism group of a finitely
generated free nilpotent group of class $\ge 2$
is unstable and undecidable. The author is
grateful to Oleg Belegradek, Edward Formanek
and Alexandre Iwanow for helpful discussions.

Some of the results of this paper were announced at
the International Conference ``Logic and Algebra''
(Istanbul, 2001); the author would like to express his
gratitude to the organizers of the Conference for
their warm hospitality.

\section{Reducing nilpotency class}

Suppose that $N$ is a free nilpotent group of class
$s \ge 2$ and let $K_m(N),$ where $m$ is
a naturalnumber, denote the kernel of the
homomorphism from the group $\aut N$ to the group
$\aut{N/N_{m+1}}$ induced by the natural homomorphism
$N \to N/N_{m+1},$ from $N$ to the free nilpotent
group $N/N_{m+1}$ of nilpotency class $m.$
In particular, $K_1(N)$ is equal to $\ia N,$
to the subgroup of so-called IA-{\it automorphisms} of $N,$
and $K_s(N) =\{\id\}.$

\begin{Lem} \label{CommModK+1}
Suppose that $\gamma$ is an $\iat$-automorphism.
Then $\gamma$ commutes with every element
of the subgroup $K_m(N)$ modulo the subgroup
$K_{m+1}(N).$
\end{Lem}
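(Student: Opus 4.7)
The plan is to establish $[\gamma,\sigma] \in K_{m+1}(N)$ for every $\sigma \in K_m(N)$; equivalently, that $\gamma\sigma(y) \equiv \sigma\gamma(y) \pmod{N_{m+2}}$ for every $y \in N$ (the congruence survives composition with $\gamma^{-1}\sigma^{-1}$ because $N_{m+2}$ is characteristic in $N$). Unpacking the hypotheses, one may write $\gamma(y) = y\cdot u(y)$ with $u(y) \in N_2$ (since $\gamma \in \ia N = K_1(N)$) and $\sigma(y) = y\cdot v(y)$ with $v(y) \in N_{m+1}$ (since $\sigma \in K_m(N)$).

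The computational backbone is the following standard commutator-calculus observation, which I would record as a preliminary fact: if $\alpha \in K_k(N)$ and $x \in N_j$, then $\alpha(x) \equiv x \pmod{N_{j+k}}$. In other words, $K_k(N)$ acts trivially on every quotient $N_j/N_{j+k}$. This is proved by induction on $j$; the case $j=1$ is the definition of $K_k(N)$, and for the inductive step one writes a generic element of $N_j$ as a product of commutators $[a,b]$ with $a \in N_{j-1}$ and $b \in N$, expands $\alpha([a,b]) = [\alpha(a),\alpha(b)]$ using the identities $[ab,c]=[a,c]^b[b,c]$ and $[a,bc]=[a,c][a,b]^c$, and absorbs the resulting correction terms into $N_{j+k}$ via the inclusion $[N_p,N_q] \subseteq N_{p+q}$.

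Two applications of this fact yield $\gamma(v(y)) \equiv v(y) \pmod{N_{m+2}}$ (case $k=1$, $j=m+1$) and $\sigma(u(y)) \equiv u(y) \pmod{N_{m+2}}$ (case $k=m$, $j=2$), so
$$
\gamma\sigma(y) = \gamma(y)\,\gamma(v(y)) \equiv y\,u(y)\,v(y) \pmod{N_{m+2}}, \qquad \sigma\gamma(y) = \sigma(y)\,\sigma(u(y)) \equiv y\,v(y)\,u(y) \pmod{N_{m+2}}.
$$
Finally, $[u(y),v(y)] \in [N_2,N_{m+1}] \subseteq N_{m+3} \subseteq N_{m+2}$, so $u(y)v(y) \equiv v(y)u(y) \pmod{N_{m+2}}$, and the two right-hand sides agree. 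The desired congruence $\gamma\sigma(y) \equiv \sigma\gamma(y) \pmod{N_{m+2}}$ then holds for every $y$, whence $[\gamma,\sigma] \in K_{m+1}(N)$.

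The only step with any real content is the preliminary commutator-calculus lemma; once it is in hand the remainder is pure bookkeeping. The main obstacle, therefore, is organising that induction cleanly — keeping careful track of which error terms land in $N_{j+k}$ versus some strictly deeper term — but no new ideas beyond the basic commutator identities and the filtration property $[N_p,N_q]\subseteq N_{p+q}$ are needed.
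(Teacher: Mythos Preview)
Your argument is correct. The preliminary fact that $\alpha\in K_k(N)$ acts trivially on each $N_j/N_{j+k}$ is standard and your inductive sketch is fine; once that is in place the bookkeeping you describe goes through exactly as written, and $[\gamma,\sigma]\in K_{m+1}(N)$ follows.

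The paper, however, takes a quite different and much shorter route. It quotes a theorem of Andreadakis to the effect that the filtration $K_1(N)\supseteq K_2(N)\supseteq\cdots$ coincides with the lower central series of $\ia N=K_1(N)$; the lemma is then immediate from the defining property $[\,G,\gamma_m(G)\,]\subseteq\gamma_{m+1}(G)$ of a lower central series. What you have done is, in effect, reprove by hand the ``easy'' inclusion $[K_1(N),K_m(N)]\subseteq K_{m+1}(N)$ of Andreadakis's result, via the action of $K_k(N)$ on the graded pieces $N_j/N_{j+k}$. Your approach is more elementary and self-contained---it needs nothing beyond the commutator calculus and $[N_p,N_q]\subseteq N_{p+q}$---whereas the paper's approach is a two-line appeal to a structural theorem, trading self-containment for brevity and a cleaner conceptual picture.
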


\begin{proof}
According to \cite{Andr}, the groups $K_m(N)$ form the
lower central series of the group $K_1(N)=\ia N;$ every element
of an arbitrary group $G$ commutes with the elements of
the $k$th term of the lower central series
of $G$ modulo the $(k+1)$th term \cite[Section 5.3]{MKS}.
\end{proof}

Like in our previous papers \cite{ToJLM1,ToCamb}, any
automorphism $\theta$ of $N,$ which inverts all
elements of some basis of $N$ will be called a {\it
symmetry}.

\begin{Lem} \label{SymmBasics}
Let $\theta$ be a symmetry.

\mbox{\em (a)} Suppose that $c$ is an element
of $N_m.$ Then $\theta$ either fixes $c$ modulo
$N_{m+1}$ {\em(}when $m$ is even{\em)}, or inverts
$c$ modulo $N_{m+1}$ {\em(}when $m$ is odd{\em)};

\mbox{\em (b)} Suppose that $\gamma$ is an
element of $K_m(N).$ Then the conjugate
of $\gamma$ by $\theta$ either equals
to $\gamma$ modulo $K_{m+1}(N)$ {\em(}when
$m$ is even{\em)} or to the inverse of $\gamma$
modulo $K_{m+1}(N)$  {\em(}when
$m$ is odd{\em)}.
\end{Lem}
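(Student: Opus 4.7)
The plan is to prove (a) first by induction on $m$ (or directly, working in the associated graded structure), and then derive (b) from (a) together with the description of $K_m(N)/K_{m+1}(N)$ in terms of ``derivations'' into $N_{m+1}/N_{m+2}$.

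For part (a), fix a basis $\{x_i : i \in I\}$ of $N$ that $\theta$ inverts, so $\theta(x_i) = x_i^{-1}$. The abelian group $N_m/N_{m+1}$ is spanned by (the classes of) simple left-normed $m$-fold commutators $[x_{i_1},x_{i_2},\ldots,x_{i_m}]$. Since $\theta$ is a group automorphism, its action on such a commutator replaces every entry $x_{i_j}$ by $x_{i_j}^{-1}$. The standard commutator identity $[a^{-1},b] \equiv [a,b]^{-1}$ modulo the next term of the lower central series, applied inductively in each of the $m$ slots, shows that each entry-inversion contributes a factor of $-1$ modulo $N_{m+1}$; so the $m$-fold commutator is multiplied by $(-1)^m$ in $N_m/N_{m+1}$. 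Hence $\theta$ acts as multiplication by $(-1)^m$ on the abelian group $N_m/N_{m+1}$, which is exactly the claim.

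For part (b), recall that $\gamma \in K_m(N)$ acts trivially on $N/N_{m+1}$, so we may write $\gamma(x_i) = x_i c_i$ with $c_i \in N_{m+1}$. Two elements of $K_m(N)$ are congruent modulo $K_{m+1}(N)$ iff they agree on each $x_i$ modulo $N_{m+2}$, so it suffices to compute $(\theta\gamma\theta^{-1})(x_i)$ modulo $N_{m+2}$ and to compare with $\gamma^{\pm 1}(x_i)$ modulo $N_{m+2}$. A direct calculation gives
\[
(\theta\gamma\theta^{-1})(x_i) \;=\; \theta(\gamma(x_i^{-1})) \;=\; \theta(c_i^{-1}x_i^{-1}) \;=\; \theta(c_i)^{-1}\,x_i.
\]
Applying part (a) to $c_i \in N_{m+1}$, we obtain $\theta(c_i) \equiv c_i^{(-1)^{m+1}} \pmod{N_{m+2}}$. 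Moreover, $[c_i,x_i]\in N_{m+2}$, so $c_i$ commutes with $x_i$ modulo $N_{m+2}$, whence
\[
(\theta\gamma\theta^{-1})(x_i) \;\equiv\; x_i\,c_i^{(-1)^m} \pmod{N_{m+2}}.
\]
On the other hand, since $\gamma$ acts trivially on $N/N_{m+1}$, a short computation (expanding $\gamma$ on an $(m{+}1)$-fold basis commutator and noting that each correction term lies in $N_{2m+1}\subseteq N_{m+2}$) shows that $\gamma$ acts trivially on $N_{m+1}/N_{m+2}$; hence $\gamma^{-1}(x_i) \equiv x_ic_i^{-1}\pmod{N_{m+2}}$. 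Comparing, we get $\theta\gamma\theta^{-1} \equiv \gamma \pmod{K_{m+1}(N)}$ when $m$ is even and $\theta\gamma\theta^{-1} \equiv \gamma^{-1} \pmod{K_{m+1}(N)}$ when $m$ is odd, as required.

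The only real obstacle is the bookkeeping in part (a) — establishing cleanly that inverting one entry of an iterated commutator of weight $m$ flips the sign modulo $N_{m+1}$. Everything else is a short calculation with the description $\gamma(x_i) = x_i c_i$ and the observation that congruences of automorphisms modulo $K_{m+1}(N)$ are equivalent to congruences of their values on basis elements modulo $N_{m+2}$.
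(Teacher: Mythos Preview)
Your proof is correct and follows essentially the same route as the paper. For part (a) the paper does exactly what you do: fix an inverted basis, reduce to $m$-fold basic commutators in $N_m/N_{m+1}$, and use the identity $[a^{-1},b]\equiv[a,b]^{-1}$ modulo the next term (the paper packages this as a one-line induction on the inner commutator). For part (b) the paper simply writes ``By (a)''; your argument via $\gamma(x_i)=x_ic_i$, the computation of $(\theta\gamma\theta^{-1})(x_i)$, and the observation that $\gamma$ acts trivially on $N_{m+1}/N_{m+2}$ is precisely the natural unpacking of that phrase.
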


\begin{proof}

(a) Assume that $\cB$ is a basis of $N$ such
that $\theta$ sends each element of $\cB$ to
its inverse. Since the group $N_m/N_{m+1}$ is abelian it
suffices to prove that $\theta$ acts in a prescribed way on
generators $[x_{i_1},x_{i_2},\ldots,x_{i_m}] N_{m+1},$ where
$x_{i_1},\ldots,x_{i_m}$ are elements of $\cB$
\cite[Section 5.3]{MKS}. We have
\begin{align*}
\theta [x_{i_1},[x_{i_2},\ldots,x_{i_m}]] &\equiv
 [x_{i_1}\inv,[x_{i_2},\ldots,x_{i_m}]^{(-1)^{m-1}}] \\
 &\equiv [x_{i_1},[x_{i_2},\ldots,x_{i_m}] ]^{ (-1)^m} (\operatorname{mod} N_{m+1}).
\end{align*}

(b) By (a).
\end{proof}

Suppose that $\f$ is an involution from $\aut N$ and
$\f_1,\f_2,\ldots,\f_m,\ldots$ are arbitrary
conjugates of $\f.$ For every
$\sigma$ in $\aut N$ let us construct the sequence
$$
\{\sigma_m(\f_1,\f_2,\ldots,\f_m) : m \in \N\}
$$
of automorphisms of $N$ as follows:
\begin{align*}
\s_0 &= \sigma,\\
\s_1 &= \f_1 \s_0 \f_1 \sigma_0\inv,\\
\s_2 &= \f_2 \s_1 \f_2 \sigma_1,\\
\s_3 &= \f_3 \s_2 \f_3 \sigma_2\inv,\\
&\ldots
\end{align*}
More formally, for every $m \ge 0$
\begin{equation}  \label{Sigma-Seq}
\sigma_{m+1} =
\begin{cases}
\f_{m+1} \sigma_m \f_{m+1} \sigma_m\inv, & \text{ if $m$ is even},\\
\f_{m+1} \sigma_m \f_{m+1} \sigma_m, & \text{ if $m$ is odd}.\\
\end{cases}
\end{equation}

The following result generalizes the corresponding
fact from \cite{ToCamb} proved there for free
nilpotent groups of nilpotency class $2.$

\begin{Prop}
Let $N$ be a free nilpotent group of nilpotency class
$s.$ Then an involution $\theta \in \aut N$ is a
symmetry modulo $\ia N$ {\em(}that is, coincides
with some symmetry modulo the group $\ia N${\em)}
if and only if for every
$\sigma$ from $\aut N$ and every tuple
$\theta_1,\theta_2,\ldots,\theta_s$ of conjugates of
$\theta$ the automorphism
$\sigma_s(\theta_1,\theta_2,\ldots,\theta_s)$ of $N$
is trivial.
\end{Prop}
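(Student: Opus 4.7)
\emph{Forward direction.} My plan is to prove by induction on $m$ that $\sigma_m(\theta_1,\ldots,\theta_m)$ lies in $K_m(N)$ for $0\le m\le s$, setting $K_0(N):=\aut N$; since $K_s(N)=\{\id\}$ by the setup, this immediately gives $\sigma_s=\id$. For the inductive step, first note that being a symmetry modulo $\ia N$ is preserved under conjugation (because $\ia N$ is normal in $\aut N$), so every $\theta_i$ admits a decomposition $\theta_i=\theta_i^\circ\gamma_i$ with $\theta_i^\circ$ a genuine symmetry and $\gamma_i\in K_1(N)=\ia N$. Lemma~\ref{CommModK+1} implies that $\gamma_{m+1}$ commutes with $\sigma_m\in K_m(N)$ modulo $K_{m+1}(N)$, so the conjugation action of $\theta_{m+1}$ on $\sigma_m$ coincides, modulo $K_{m+1}(N)$, with that of the honest symmetry $\theta_{m+1}^\circ$. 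By Lemma~\ref{SymmBasics}(b) this action is the identity when $m$ is even and is inversion when $m$ is odd. The signs in (\ref{Sigma-Seq}) are engineered precisely so that, together with $\theta_{m+1}^2=\id$, one gets $\sigma_{m+1}\equiv\id\pmod{K_{m+1}(N)}$. The base case $m=0$ goes the same way, using that the image of a symmetry in $\aut{N/N_2}$ is $-I$, which is central.

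\emph{Converse direction.} Suppose contrapositively that $\theta$ is not a symmetry modulo $\ia N$. Since every honest symmetry projects to $-I$ in $\aut{N/N_2}$ and $\ia N=\ker(\aut N\to \aut{N/N_2})$, this is equivalent to the statement that $\bar\theta\in \aut{N/N_2}$ is an involution different from $-I$; in particular $\bar\theta$ is not central in $\aut{N/N_2}$, whose centre is $\{\pm I\}$. Hence a suitable $\sigma$ yields $[\bar\theta,\bar\sigma]=\bar\sigma_1\ne I$, i.e. $\sigma_1\notin K_1(N)$. The plan is then to choose the remaining conjugates $\theta_2,\ldots,\theta_s$ so that, at each subsequent level of the filtration $K_1\supset K_2\supset\cdots\supset K_s=\{\id\}$, the automorphism $\sigma_m$ continues to have a nontrivial image in the appropriate subquotient. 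A natural way to organise this is induction on the nilpotency class $s$: for $s=2$ invoke the result of \cite{ToCamb}, and for $s\ge 3$ project to the characteristic quotient $N/N_s$ (free nilpotent of class $s-1$), apply the induction hypothesis to produce witnesses there, lift them to $\aut N$, and append a final conjugate $\theta_s$ chosen to keep $\sigma_s$ nontrivial in the deepest layer.

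\emph{Main obstacle.} The forward direction is essentially a textbook induction once the two preceding lemmas are in hand; the real work is in the converse. The subtlety is that passing from $\sigma_1\notin K_1(N)$, which falls out immediately from $\bar\theta\ne -I$, to the stronger conclusion $\sigma_s\ne\id$ is by no means automatic: the assignment $\sigma_m\mapsto\sigma_{m+1}$ is nonlinear, and its interaction with projections $N\twoheadrightarrow N/N_k$ does not visibly preserve non-triviality. I expect the delicate part to be arranging each of the $s-1$ further applications of the recursion to retain a nontrivial contribution — probably via a double induction on $s$ and on $m$ that tracks $\sigma_m$ in each graded piece $K_{m-1}(N)/K_m(N)$, calibrated so that the final $\theta_s$ can be inserted without collapsing the construction.
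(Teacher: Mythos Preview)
Your forward direction is correct and essentially identical to the paper's. For the converse, however, the induction on the nilpotency class $s$ that you propose is not the paper's route, and the obstacle you yourself flag is genuine and unresolved: after lifting witnesses from $\aut{N/N_s}$ you only know $\sigma_{s-1}\notin K_{s-1}(N)$, and nothing in your scheme produces a conjugate $\theta_s$ of $\theta$ with $\sigma_s\ne\id$. Strengthening the hypothesis to track $\sigma_m$ in every graded piece would amount to redoing the entire construction at each class, so the induction buys nothing.

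The paper bypasses this by working entirely in the abelianization $\overline N=N/[N,N]$. The key point you are missing is that every automorphism of $\overline N$ lifts to $\aut N$; hence one may first build the whole sequence $\bar\sigma_0,\bar\sigma_1,\ldots$ inside $\aut{\overline N}$, and if every $\bar\sigma_m$ is nontrivial there then any lift of the data to $\aut N$ gives $\sigma_m\ne\id$ for all $m$, since the projection $\aut N\to\aut{\overline N}$ is a homomorphism. Via the Hua--Reiner decomposition of a non-central involution of $\overline N$ into an invariant rank-two summand and a complement, this reduces to $\gl{2,\Z}$. There the paper exhibits a one-parameter family of conjugates $\left(\begin{smallmatrix}1&0\\k&-1\end{smallmatrix}\right)$ of $\bar\theta$ (with $k$ of fixed parity determined by the conjugacy class) and checks that for any non-central $S$ some entry of $\left(\begin{smallmatrix}1&0\\k&-1\end{smallmatrix}\right)S\left(\begin{smallmatrix}1&0\\k&-1\end{smallmatrix}\right)S^{\pm1}$ depends linearly on $k$; so one can always choose the next conjugate to keep the sequence non-central, hence nontrivial, for \emph{infinitely} many steps. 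No induction on $s$ is needed, and the construction is uniform in the nilpotency class.
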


\begin{proof} Suppose that $\theta= \theta^* \gamma,$
where $\theta^*$ is a symmetry and $\gamma$ is an
IA-automorphism. Since $\theta$ is an involution,
then $\theta^* \gamma = \gamma\inv \theta^*.$
Any member $\theta_k$ of the tuple
$\theta_1,\theta_2,\ldots,\theta_s,$ a symmetry
modulo $\ia N,$ also has the form $\theta^* \gamma_k$
for a suitable $\iat$-automorphism $\gamma_k.$

Let us prove by induction on $m$ that {\it the
automorphism $\sigma_m = \sigma_m(\theta_1,\ldots,\theta_m)$ is an
element of $K_m(N).$} This will follow the
necessity part of the Proposition.

Indeed, if $m=1,$ then $\sigma_m$ is an $\iat$-automorphism,
that is a member of $K_1(N).$ Assume that $\s_m \in K_m(N)$
and let $m$ be, for instance, even. We have by Lemma \ref{SymmBasics}(b)
and Lemma \ref{CommModK+1}:
\begin{align*}
\sigma_{m+1} &= \theta_{m+1} \sigma_m \theta_{m+1} \sigma_m\inv
             =\gamma_{m+1}\inv \theta^* \sigma_m \theta^* \gamma_{m+1} \sigma_m\inv \\
	     &\equiv \gamma_{m+1}\inv \sigma_m \gamma_{m+1} \sigma_m\inv
	     \equiv \id (\operatorname{mod} K_{m+1}(N)).
\end{align*}

Let us prove the converse. It is well-known that every
automorphism of the abelianization $\overline N$ of $N,$ the
free abelian group $N/[N,N],$ can
be lifted up to an automorphism of $N$ (see, for
instance, \cite[\S \ 4]{Malt} or \cite[Section 3.1, Section 4.2]{HN}).  Then it
suffices to prove that for every involution of
$\aut{\overline N},$ which is not $-\id,$ there
exist an infinite sequence of the form (\theequation),
constructed inside $\aut{\overline N},$ which contains
no trivial members.

It can be seen quite easily that every involution
$f \in \aut{\overline N}$, which is not $-\id,$ has two
$f$-invariant direct summands $B,C$ of $\overline N$
with $\overline N=B\oplus C$
and $\rank B=2;$ moreover, the action of $f$
on $B$ can chosen so that $f|_B$ is neither $\id_B,$
nor $-\id_B$ (\cite[Theorem 1.4]{ToCamb}, \cite[Lemma
1]{HuaRei}). This reduces the problem to the
automorphism groups of two-generator free abelian
groups; for the sake of simplicity we shall work with
the group $\gl{2,\Z}.$

According to the just mentioned result from \cite{HuaRei},
every involution in $\gl{2,\Z}$ is conjugate either
to the involution
$$
\begin{pmatrix}
1 & 0 \\
0 & -1
\end{pmatrix},
$$
or to the involution
$$
\begin{pmatrix}
0 & 1 \\
1 & 0
\end{pmatrix};
$$
hence the group $\gl{2,\Z}$ has exactly two conjugacy
classes of non-central involutions. One readily checks
that for every integer $m$
\begin{equation}
\begin{pmatrix}
1  & 0 \\
2m & -1
\end{pmatrix}
\sim
\begin{pmatrix}
1 & 0 \\
0 & -1
\end{pmatrix}
\text{ and }
\begin{pmatrix}
1 & 0 \\
2m-1 & -1
\end{pmatrix}
\sim
\begin{pmatrix}
0 & 1 \\
1 & 0
\end{pmatrix},
\end{equation}
where $\sim$ denotes the conjugacy relation.

Let $S$ be a non-central matrix
from $\gl{2,\Z}$ and $m$ an integer
number. Suppose that
\begin{align*}
X(m) &=
\begin{pmatrix}
1 & 0 \\
2m & -1
\end{pmatrix}
S
\begin{pmatrix}
1 & 0 \\
2m & -1
\end{pmatrix}
S,
\\
Y(m) &=
\begin{pmatrix}
1 & 0 \\
2m & -1
\end{pmatrix}
S
\begin{pmatrix}
1 & 0 \\
2m & -1
\end{pmatrix}
S\inv.
\end{align*}
There are no difficulties in the verification of the
following fact: some element of the `general' matrix
$X$ (and $Y$) depends linearly on $m.$ It follows
that for a suitable integer $m$ the matrix $X(m)$ ($Y(m)$)
is again non-central. This means that, starting with a
non-central matrix, we can construct an infinite sequence of the
form \eqref{Sigma-Seq} having no central matrices; in
particular, there will be no trivial matrices in this
sequence. Exactly the same argument, using matrices
of the form
$$
\begin{pmatrix}
1 & 0 \\
2m-1 & -1
\end{pmatrix},
$$
proves the similar result for the second conjugacy
class of non-central involutions in $\gl{2,\Z}.$
\end{proof}

\begin{Cor}
Symmetries modulo $\ia N$ form a definable
family in the group $\aut N.$
\end{Cor}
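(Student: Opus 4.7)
The plan is to transcribe the characterization provided by the preceding Proposition directly into the language of groups. Being an involution is first-order: $\theta^2 = \id$. Being a conjugate of $\theta$ is first-order: $\exists g\,(\eta = g \theta g\inv)$. So the only question is whether the termination condition ``$\sigma_s(\theta_1,\ldots,\theta_s) = \id$ for every $\sigma \in \aut N$ and every $s$-tuple of conjugates of $\theta$'' can be written as a single first-order formula.

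The key observation is that the nilpotency class $s$ is a fixed natural number. Hence the recursion \eqref{Sigma-Seq}, which builds $\sigma_{m+1}$ from $\sigma_m$ and $\theta_{m+1}$ by a bounded number of multiplications and inversions, unfolds after $s$ steps into a single group word $W_s(\sigma,\theta_1,\ldots,\theta_s)$ in $s+1$ letters. (Its length is at most exponential in $s$, but it is fixed once $s$ is fixed.) The required condition then becomes
\begin{equation*}
\forall \sigma\, \forall \theta_1 \cdots \forall \theta_s\, \forall g_1 \cdots \forall g_s\Bigl[\bigwedge_{k=1}^{s} \theta_k = g_k \theta g_k\inv \rightarrow W_s(\sigma,\theta_1,\ldots,\theta_s) = \id\Bigr],
\end{equation*}
a first-order formula $\Psi(\theta)$ in the pure group language.

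By the Proposition, the set of symmetries modulo $\ia N$ is then defined in $\aut N$ by $\theta^2 = \id \wedge \Psi(\theta)$. (If one wishes to exclude $\id$ explicitly---since $N$ is torsion-free, the identity inverts no basis element and so is not a symmetry---one simply conjoins $\theta \neq \id$.) I foresee no real obstacle, as the substantive work has been done in proving the Proposition; the Corollary is a purely formal transcription of its content, relying only on the finiteness of $s$ to ensure that the recursive schema fits into a single first-order sentence.
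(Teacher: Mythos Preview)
Your proposal is correct and is exactly the intended reading: the paper states the Corollary without proof, treating it as immediate from the preceding Proposition, and your argument simply makes explicit why that characterization is first-order expressible (the key point being that $s$ is fixed, so the recursion \eqref{Sigma-Seq} unfolds into a single group word). Your remark about possibly excluding $\id$ is also appropriate, since the paper's use of ``involution'' implicitly means order exactly two.
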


Let $T^-(N)$ denote the set of all automorphisms
$\{\s\}$ of $N$ such that for every $\theta,$ which is
a symmetry modulo $\ia N,$ the conjugate of
$\s$ by $\theta$ is equal to $\s\inv.$ Similarly
$T^+(N)$ denotes the set of all automorphisms
of $N,$ which commute with every symmetry modulo
$\ia N.$

\begin{Prop} \label{One-Step-Down}
Let $N$ be a free nilpotent group of nilpotency class
$s.$ Then $K_{s-1}(N) = T^+(N) \cup T^-(N).$ Therefore
$K_{s-1}(N),$ the kernel of a surjective homomorphism
from $\aut N$ to the automorphism group of a free
nilpotent group of nilpotency class $s-1$ and
of the same rank as one of $N,$ is a
definable subgroup of $\aut N.$
\end{Prop}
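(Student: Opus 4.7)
The forward inclusion $K_{s-1}(N) \subseteq T^+(N) \cup T^-(N)$ is immediate from Lemmas \ref{CommModK+1} and \ref{SymmBasics}(b) specialized to $m = s-1$: since $K_s(N) = \{\id\}$, every $\sigma \in K_{s-1}(N)$ actually commutes with every $\iat$-automorphism, and $\theta^*\sigma\theta^{*-1}$ equals $\sigma^{(-1)^{s-1}}$ exactly for every symmetry $\theta^*$. Writing an arbitrary symmetry mod $\ia N$ as $\theta = \theta^*\gamma$ one gets
\[
\theta\sigma\theta^{-1} \;=\; \theta^*(\gamma\sigma\gamma^{-1})\theta^{*-1} \;=\; \theta^*\sigma\theta^{*-1} \;=\; \sigma^{(-1)^{s-1}},
\]
so $\sigma$ lies in $T^+(N)$ or $T^-(N)$ according to the parity of $s-1$.

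For the reverse inclusion, let $\sigma \in T^\epsilon(N)$ with $\epsilon \in \{+,-\}$. The first step is to show that $\sigma$ centralizes $\ia N$. Fixing a symmetry $\theta^*$ and letting $\gamma \in \ia N$ vary, the identity $(\theta^*\gamma)\sigma(\theta^*\gamma)\inv = \sigma^\epsilon$ rearranges to $\theta^*(\gamma\sigma\gamma\inv)\theta^{*-1} = \sigma^\epsilon$. The right-hand side is independent of $\gamma$, hence so is $\gamma\sigma\gamma\inv$; comparing with $\gamma = \id$ gives $\gamma\sigma\gamma\inv = \sigma$ for every $\gamma \in \ia N$, and in particular $\theta^*\sigma\theta^{*-1} = \sigma^\epsilon$ for every symmetry $\theta^*$.

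The main obstacle is then to upgrade these two conditions to $\sigma \in K_{s-1}(N)$, which I would do in two substeps. First, I would show $\sigma \in \ia N$. The image $\bar\sigma$ of $\sigma$ in $\aut{N/[N,N]} = \gl{n,\Z}$ acts by conjugation trivially on $\ia N$ viewed as a $\gl{n,\Z}$-module, and for free nilpotent $N$ this representation is essentially faithful (its kernel consists only of the scalars $\pm\id$), forcing $\bar\sigma \in \{\pm\id\}$. The case $\bar\sigma = -\id$ is then ruled out: any lift would have the form $\theta^*_0\gamma_0$ for some symmetry $\theta^*_0$ and $\gamma_0 \in \ia N$, and combining $\sigma\gamma\sigma\inv = \gamma$ with $\gamma_0\gamma\gamma_0\inv \equiv \gamma \pmod{K_2(N)}$ and Lemma \ref{SymmBasics}(b) at $m=1$ yields $\gamma \equiv \gamma\inv \pmod{K_2(N)}$ for every $\gamma \in \ia N$, contradicting the torsion-freeness of the abelian group $\ia N / K_2(N)$ (a standard feature of the Andreadakis filtration for free nilpotent groups). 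Hence $\bar\sigma = \id$ and $\sigma \in \ia N$. Second, the centralizing property then places $\sigma$ in $Z(\ia N)$, which for free nilpotent $N$ of class $s$ is known to coincide with $K_{s-1}(N)$, the last nontrivial term of the lower central series of $\ia N$; this gives $\sigma \in K_{s-1}(N)$ and completes the equality.

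The definability of $K_{s-1}(N)$ in $\aut N$ stated in the second sentence follows at once: by the preceding corollary, symmetries modulo $\ia N$ form a first-order definable family in $\aut N$, and the conditions carving out $T^+(N)$ and $T^-(N)$ are manifestly first-order in this family, so their union $K_{s-1}(N)$ is definable.
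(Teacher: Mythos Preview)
Your forward inclusion $K_{s-1}(N) \subseteq T^+(N) \cup T^-(N)$ is correct and is essentially the paper's argument (the paper cites only Lemma~\ref{SymmBasics}(b); your explicit appeal to Lemma~\ref{CommModK+1} to dispose of the $\ia N$-part of $\theta$ is the right elaboration).

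The reverse inclusion has a genuine gap at its very first step. A ``symmetry modulo $\ia N$'' is by definition an \emph{involution} of $\aut N$ that agrees with some symmetry on the abelianization. For a fixed symmetry $\theta^*$ and arbitrary $\gamma \in \ia N$, the product $\theta^*\gamma$ is an involution only when $\theta^*\gamma\theta^{*-1} = \gamma^{-1}$; Lemma~\ref{SymmBasics}(b) gives this merely modulo $K_2(N)$, so for $s \ge 3$ it fails for generic $\gamma$. Hence the identity $(\theta^*\gamma)\sigma(\theta^*\gamma)^{-1} = \sigma^{\epsilon}$ is not available for all $\gamma \in \ia N$, and your conclusion that $\sigma$ centralizes all of $\ia N$ is unjustified. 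The later steps also rest on facts you assert without argument (near-faithfulness of the conjugation action on $\ia N$, and the identification $Z(\ia N) = K_{s-1}(N)$), which would need separate justification, especially in infinite rank.

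The paper's route for $T^+(N) \cup T^-(N) \subseteq K_{s-1}(N)$ is shorter and sidesteps all of this. Any $\sigma \in T^+(N) \cup T^-(N)$ commutes with every product of two genuine symmetries: e.g.\ for $\sigma \in T^-(N)$ one has $\theta_1\theta_2\sigma\theta_2\theta_1 = \theta_1\sigma^{-1}\theta_1 = \sigma$. But conjugation by any primitive element $x$ is such a product: if $x$ lies in a basis $\cB$, let $\theta_1$ invert $\cB$ and let $\theta_2$ send $x \mapsto x^{-1}$ and $y \mapsto x^{-1}y^{-1}x$ for $y \in \cB\setminus\{x\}$; then $\theta_1\theta_2$ is conjugation by $x$. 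Thus $\sigma$ centralizes $\Inn N$, hence fixes every element of $N$ modulo the center $N_s$, i.e.\ $\sigma \in K_{s-1}(N)$.
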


\begin{proof}
An arbitrary element $\s$ from $T^+(N) \cup T^-(N)$
must commute with any product of two symmetries: if,
for instance, $\s \in T^-(N),$ $\theta_1$ and
$\theta_2$ are two symmetries then
$$
\theta_1 \theta_2 \s (\theta_1 \theta_2)\inv =
\theta_1 \theta_2 \s \theta_2 \theta_1 =
\theta_1 \s\inv \theta_1 =\s.
$$
On the other hand, one finds among the automorphisms
of $N,$ which can be expressed as a product of
two symmetries, conjugations (inner
automorphisms of $N$) by primitive elements (that is,
members of bases of $N$).
This implies that $\s$ commutes with every
element of $\inn N.$ Hence $\s$ preserves
each element of $N$ modulo the center of $N.$
The center of $N$ is equal to the subgroup
$N_s$ \cite[Section 3.1]{HN}, and therefore $\s \in
K_{s-1}(N).$

Let $\tau$ be a conjugation by a primitive
element $x$ of $N.$ We are going to represent
$\tau$ as a product of two symmetries.
The element $x$ is a member of some basis $\cB$ of $N.$ Suppose
$\theta_1$ is a symmetry, which inverts
each element of $\cB.$ Then if a symmetry
$\theta_2$ is defined as follows
\begin{alignat*} 2
\theta_2 x &=x\inv,         &&\\
\theta_2 y &=x\inv y\inv x, &\quad  \forall y \in \cB \setminus \{x\}, &
\end{alignat*}
the product of $\theta_1 \theta_2$ of $\theta_1$ and
$\theta_2$ is equal to $\tau.$

Conversely, according to Lemma \ref{SymmBasics} (b)
every element of $K_{s-1}(N)$ either lies
in $T^+(N),$ or in $T^-(N).$
\end{proof}

\section{Interpretations}

\begin{Th} \label{Interp-of-Aut-of-2-step}
Let $N$ be a free nilpotent group of class $\ge 2.$
Then the automorphism group of $N$ first-order
interprets the automorphism group of a free nilpotent
group of class $2$ and of rank which is the same as
one of $N$ {\em(}uniformly in $N${\em).}
\end{Th}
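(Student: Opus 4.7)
The plan is to iterate Proposition \ref{One-Step-Down}, reducing the nilpotency class by one at each stage until we reach class $2$.

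First I would observe that the natural homomorphism $\aut N \to \aut{N/N_s}$ is surjective: every automorphism of the free nilpotent group $N/N_s$ of class $s-1$ lifts to an automorphism of $N$ by the same lifting property invoked in the second half of the proof of Proposition \ref{One-Step-Down} (see \cite{Malt} or \cite{HN}). Its kernel is $K_{s-1}(N)$ by definition, and that kernel is first-order definable in $\aut N$ by Proposition \ref{One-Step-Down}. Hence the quotient $\aut N / K_{s-1}(N) \cong \aut{N/N_s}$ is first-order interpretable in $\aut N$.

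Now $N/N_s$ is itself a free nilpotent group of class $s-1$ and of the same rank as $N$, so the whole apparatus (Lemma \ref{SymmBasics}, the sequence $\sigma_m(\theta_1,\ldots,\theta_m)$, the characterisation of symmetries modulo $\iat$, and Proposition \ref{One-Step-Down}) can be applied inside the interpreted structure $\aut{N/N_s}$. This produces a first-order interpretation of $\aut{(N/N_s)/(N/N_s)_{s-1}} = \aut{N/N_{s-1}}$ inside $\aut{N/N_s}$; and since the composition of first-order interpretations is again a first-order interpretation, we obtain an interpretation of $\aut{N/N_{s-1}}$ in $\aut N$. Iterating the reduction $s-2$ times, applied successively to $N,\ N/N_s,\ N/N_{s-1},\ \ldots,\ N/N_4$, delivers an interpretation of $\aut{N/N_3}$ in $\aut N$, which is the desired automorphism group of a free nilpotent group of class $2$ and of the same rank as $N$.

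Uniformity in $N$ is automatic: the defining formulas (the family of symmetries modulo $\iat$ and the description of $K_{k-1}$ as $T^+ \cup T^-$ from Proposition \ref{One-Step-Down}) depend only on the nilpotency class of the free group whose automorphism group is being analysed, not on its rank, and the finite composition depends only on $s$. The one point that needs verification is that each application of Proposition \ref{One-Step-Down} inside an interpreted structure is legitimate; but this is immediate, since that proposition characterises $K_{k-1}$ by first-order conditions formulated entirely inside the ambient automorphism group, so its content is invariant under isomorphism and transports verbatim to the interpreted $\aut{\cdot}$ at each stage, with no recourse to the original $N$ required.
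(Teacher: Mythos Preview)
Your argument is correct and is exactly the intended expansion of the paper's one-line proof ``By Proposition~\ref{One-Step-Down}'': you iterate that proposition $s-2$ times, using at each stage the definability of $K_{k-1}$ and the surjectivity of $\aut{M}\to\aut{M/M_k}$ (which is already part of the statement of Proposition~\ref{One-Step-Down}, so your separate justification via lifting is harmless but not needed). Your remarks on uniformity in the rank for fixed nilpotency class are also in line with the paper's usage.
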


\begin{proof}
By Proposition \ref{One-Step-Down}.
\end{proof}

Until otherwise stated, we shall assume that {\it $N$
is a free nilpotent group of class $2$} and
that {\it $A$ denotes the abelianization $\overline N$ of $N.$}

It can be shown that $\inn N,$ the subgroup of all
conjugations, is a $\varnothing$-definable subgroup of
$\aut N$ \cite[Corollary 3.2]{ToCamb}.  The group
$\inn N$ is isomorphic to the free abelian group
$A.$ Thus, we can interpret in $\aut N$ the free
abelian group $A$ and the automorphism group of $A$
with the action on the elements of $A.$

We can also interpret in $\aut N$ the family
$\cD$ of all direct summands of $A$ with inclusion
relation and a binary relation, say $R$ such
that
$$
R(B,C) \longleftrightarrow A = B \oplus C.
$$
One can prove that an involution $f$ from $\aut A$ is
diagonalizable in some basis of $A$ if only if there
are no elements of order three in the set $K(f)K(f),$
where $K(f)$ denotes the conjugacy class of $f$ (see
proof of Proposition 2.4 in \cite{ToCamb}). Hence the
fixed-point subgroups of diagonalizable involutions
can be used to interpret the direct summands. Having the
group $A$ interpreted in $\aut N,$ we can easily interpret the
inclusion relation and the relation $R$ on the
family $\cD.$

Summing up, we see that {\it the group $\aut N$ first-order
interprets the multi-sorted structure $\mathcal M$ with
the following description:}
\begin{itemize}
\item the sorts of $\mathcal M$ are the free abelian
group $A,$ its automorphism group $\aut A$
and the family $\cD$ of all direct summands of $A;$

\item all sorts carry their natural relations; the
relations of $\cD$ are the inclusion relation and the
relation $R;$

\item $\mathcal M$ has as one of the basic relations
the membership relation on $A \cup \cD;$

\item there are relations defining the action
of $\aut N$ on other sorts.
\end{itemize}

\begin{Lem} \label{cM-Interprets-Set-Theory}
Let $A$ be of infinite rank. Then the first-order
theory of the structure $\mathcal M$ syntactically
interprets the full second-order theory of
the set $|A|$ {\em(}in empty language{\em)},
uniformly in $A.$
\end{Lem}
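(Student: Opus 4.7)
The plan is to reduce the statement to Shelah's theorem from \cite{ShCler} via a two-step interpretation: first, exhibit a first-order interpretation of the endomorphism ring $\End{A_1}$ of a rank-$\lambda$ direct summand $A_1\le A$ inside $\mathcal M$, using finitely many parameters; second, invoke Shelah's theorem, which states that for any free algebra $F_\lambda$ in a variety whose language has cardinality at most $\lambda$ the first-order theory of $\End{F_\lambda}$ syntactically interprets $\operatorname{Th}_2(\lambda)$. Since $A_1$ is a free abelian group of rank $\lambda$ and the variety of abelian groups has a finite language, Shelah's hypotheses are satisfied, and composing the two interpretations yields the desired syntactic interpretation of $\operatorname{Th}_2(\lambda)$ in $\operatorname{Th}(\mathcal M)$.

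To interpret $\End{A_1}$ in $\mathcal M$, I would fix as parameters two complementary direct summands $A_1,A_2\in\cD$ of rank $\lambda$ (so $A=A_1\oplus A_2$) and an involution $\phi\in\aut A$ exchanging them; these exist because $A$ has infinite rank. The ``unipotent'' subgroup
\[
U:=\{\tau\in\aut A:\tau|_{A_1}=\id_{A_1}\text{ and }\tau(a)-a\in A_1\text{ for all }a\in A\}
\]
is $\varnothing$-definable from the parameters using the action of $\aut A$ on $A$, and the map $\tau\mapsto\psi_\tau$ with $\psi_\tau(a):=\tau(\phi(a))-\phi(a)$ for $a\in A_1$ is a bijection from $U$ onto $\operatorname{Hom}(A_1,A_1)=\End{A_1}$. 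A direct calculation gives $\psi_{\tau_1\tau_2}=\psi_{\tau_1}+\psi_{\tau_2}$, so multiplication in $U$ corresponds to addition in $\End{A_1}$, which recovers the additive group of the ring.

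Composition in $\End{A_1}$ can be recovered using conjugation by $\phi$. For $\tau,\tau'\in U$, the element $\phi\tau\phi\cdot\tau'\in\aut A$ encodes in its action on $A_2$ the composite $\psi_\tau\circ\psi_{\tau'}$: its ``$A_2$-shift'' at $a_2\in A_2$ contains the term $\phi\bigl(\psi_\tau(\psi_{\tau'}(\phi(a_2)))\bigr)$, modulo lower-order additive corrections that can be cancelled using the additive structure of $U$. Isolating this term gives a first-order formula for composition, so $\End{A_1}$ is interpreted as a ring in $\mathcal M$.

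The principal obstacle is the explicit extraction of composition: the formula for $\phi\tau\phi\cdot\tau'$ mixes composition with addition in a non-trivial way, and some care is required to disentangle them; this is essentially a bookkeeping calculation with the given parameters. Once done, Shelah's theorem applied to $\End{A_1}$ completes the proof uniformly in $A$.
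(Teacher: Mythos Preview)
Your proposal is correct and follows the paper's strategy exactly: interpret the endomorphism semigroup of (a rank-$|A|$ direct summand of) $A$ inside $\mathcal M$ with finitely many parameters, and then apply Shelah's theorem from \cite{ShCler}. The only difference is cosmetic: the paper encodes endomorphisms via the folklore ``three pairwise complementary submodules'' trick (deferring details to \cite{ToAPAL}), whereas you use two complementary summands together with a swap involution and the associated unipotent subgroup---these are equivalent standard devices, and your choice has the minor bonus that the existence of the involution $\phi$ with $\phi(A_1)=A_2$ automatically forces $\operatorname{rank}A_1=\operatorname{rank}A$, so uniformity in $A$ is immediate.
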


\begin{proof}
It follows from the results in Section 4 of
\cite{ShCler}, that the first-order theory of the
endomorphism semi-group $\End A$ of $A$ syntactically
interprets $\operatorname{Th}_2(|A|)$ (for the sake of
convenience the reader may refer to \cite{ToAPAL},
where the very similar case of varieties of vector spaces
is considered in some details in the proof of Proposition 10.1;
an analysis of the proof shows that it
works also for free $\Z$-modules, or, in other words,
for free abelian groups).

To complete the proof, we could therefore interpret in
$\mathcal M$ the endomorphism semi-group of $A.$
There is a (folklore) trick by which the endomorphisms
can be interpreted in structures similar to $\mathcal M$
constructed over modules. This trick can be
briefly characterized as follows: three submodules,
such that any two of them are direct complements of
each other, are used to interpret the endomorphism
semi-group of one of them.  A detailed description of
the trick for infinite-dimensional vector spaces
can be found in \cite{ToAPAL} (see the
proof of Proposition 9.3); the reader is
again referred to \cite{ToAPAL} to see
that everything works for free $\Z$-modules
as well.
\end{proof}

The following result solves the problem posed
by S.~Shelah (see the Introduction) for all varieties
of nilpotent groups $\mathfrak N_s,$ where $s \ge 2.$

\begin{Th}
The first-order theory of the automorphism
group of any infinitely generated free nilpotent
group $N$ of class $\ge 2$ syntactically
interprets the full second-order theory
of the set $|N|.$ The first-order theory
of $\aut N$ is therefore unstable and
undecidable.
\end{Th}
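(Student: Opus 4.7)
The plan is to chain together the syntactic interpretations already established in the excerpt. First, I would invoke Theorem~\ref{Interp-of-Aut-of-2-step} to reduce from arbitrary nilpotency class $s\ge 2$ to class exactly $2$: that theorem supplies (uniformly in $N$) a first-order interpretation in $\aut N$ of the automorphism group $\aut{N_0}$ of the free nilpotent group $N_0$ of class $2$ and of the same rank $\lambda$ as $N$. Since $N$ is infinitely generated of infinite rank $\lambda$, an easy cardinality count on basic commutators gives $|N|=\lambda=|N_0|$, so $\operatorname{Th}_2(|N|)$ and $\operatorname{Th}_2(|N_0|)$ are the same theory.

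Second, the discussion preceding Lemma~\ref{cM-Interprets-Set-Theory} already shows that $\aut{N_0}$ first-order interprets the multi-sorted structure $\mathcal M$ whose sorts are the abelianization $A=\overline{N_0}$, the automorphism group $\aut A$ and the family $\cD$ of direct summands of $A$, together with their natural relations. I would simply record this interpretation; its key ingredients are the $\varnothing$-definability of $\inn{N_0}\cong A$ inside $\aut{N_0}$ (from \cite[Corollary 3.2]{ToCamb}) and the characterization of diagonalizable involutions $f\in\aut A$ by the absence of order-three elements in the product of conjugacy classes $K(f)K(f)$, whose fixed-point subgroups serve as the interpretations of the elements of $\cD$.

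Third, Lemma~\ref{cM-Interprets-Set-Theory} delivers a syntactic interpretation of $\operatorname{Th}_2(|A|)$ in the first-order theory of $\mathcal M$. Composing the three interpretations—$\aut N \rightsquigarrow \aut{N_0} \rightsquigarrow \mathcal M \rightsquigarrow \operatorname{Th}_2(|A|)$—and using $|A|=\lambda=|N|$ yields the desired syntactic interpretation of $\operatorname{Th}_2(|N|)$ into the first-order theory of $\aut N$.

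For the final assertion, I would note that $\operatorname{Th}_2(\lambda)$ on an infinite set interprets true second-order arithmetic (hence is undecidable) and defines, for every cardinal $\kappa\le\lambda$, linear orders of size $\kappa$ on the predicates, which forbids stability; both properties transfer along any syntactic interpretation. Hence the first-order theory of $\aut N$ is unstable and undecidable. There is in effect no substantive obstacle left: the real work was done in Sections~2 and~3, and this theorem is just the composition of the interpretations already produced there.
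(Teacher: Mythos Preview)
Your proposal is correct and follows exactly the paper's own argument: the paper's proof is the single line ``By Theorem~\ref{Interp-of-Aut-of-2-step} and Lemma~\ref{cM-Interprets-Set-Theory},'' and you have simply spelled out the composition of interpretations (together with the routine cardinality check $|N|=|N_0|=|A|=\lambda$ and the standard observation that interpreting $\operatorname{Th}_2(\lambda)$ forces instability and undecidability).
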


\begin{proof}
By Theorem \ref{Interp-of-Aut-of-2-step}
and Lemma \ref{cM-Interprets-Set-Theory}.
\end{proof}

We have also solved the problem of classification of
elementary types of the automorphism groups of
infinitely generated free groups from varieties $\mathfrak
N_s:$

\begin{Th}
Let $N_1$ and $N_2$ be infinitely generated free
nilpotent groups of the same class $\ge 2.$ Then the
automorphism groups $\aut{N_1}$ and $\aut{N_2}$ are
elementarily equivalent if and only if the sets
$|N_1|$ and $|N_2|$ {\em(}with no structure{\em)} are
equivalent in the full second-order logic.
\end{Th}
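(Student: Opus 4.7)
The proof has two directions. The necessity direction $(\Rightarrow)$ follows at once from the preceding theorem, while the sufficiency $(\Leftarrow)$ requires a complementary ``reverse interpretation'' of $\aut{N}$ in the second-order theory of the bare set $|N|$.

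For the necessity: the preceding theorem provides a single syntactic interpretation $\chi \mapsto \chi^*$ from second-order sentences in the empty language about $|N|$ to first-order sentences about $\aut{N},$ with $\chi \in \operatorname{Th}_2(|N|) \iff \chi^* \in \operatorname{Th}(\aut{N}),$ uniformly in $N$ of class $\ge 2.$ Hence $\aut{N_1} \equiv \aut{N_2}$ implies $\operatorname{Th}_2(|N_1|) = \operatorname{Th}_2(|N_2|),$ which is precisely second-order equivalence of the bare sets $|N_1|$ and $|N_2|.$

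For the sufficiency: since $|N_i| = \lambda_i = \rank(N_i)$ for infinite $N_i,$ it suffices to exhibit, for each first-order group-theoretic sentence $\phi,$ a second-order sentence $\phi^\#$ in the empty language such that $\aut{F_{\lambda,s}} \models \phi$ if and only if $\lambda$ satisfies $\phi^\#$ in full second-order logic, uniformly in the cardinal $\lambda.$ Fix a standard encoding of $F_{\lambda,s}$ on a copy of $\lambda$ via Malcev normal forms in a fixed free generating set indexed by $\lambda;$ the multiplication on $F_{\lambda,s}$ then becomes a specific ternary relation on $\lambda$ describable by a fixed second-order formula. An automorphism of $F_{\lambda,s}$ is a binary relation on $\lambda$ satisfying a second-order describable condition (bijective homomorphism), and composition of automorphisms is a describable ternary relation on such binary relations. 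Translating each first-order quantifier over $\aut{F_{\lambda,s}}$ into a second-order quantifier over binary relations on $\lambda$ produces the required $\phi^\#.$ From $\operatorname{Th}_2(\lambda_1) = \operatorname{Th}_2(\lambda_2)$ we then conclude $\aut{N_1} \equiv \aut{N_2}.$

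The principal technical point lies in the sufficiency direction: one must carry out the passage from a bare $\lambda$-set to $\aut{F_{\lambda,s}}$ inside full second-order logic. This is routine since full second-order logic over $\lambda$ admits quantification over relations of every finite arity, so that both elements of $F_{\lambda,s}$ and its automorphisms are captured as second-order objects over $\lambda;$ nevertheless, an explicit check is warranted to confirm that the free nilpotent construction and the group operation on its automorphism group are described by fixed second-order formulas independent of $\lambda.$
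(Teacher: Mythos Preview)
Your proof is correct and matches the paper's approach; the paper's own argument is a bare citation of Theorem~\ref{Interp-of-Aut-of-2-step} and Lemma~\ref{cM-Interprets-Set-Theory}, leaving the sufficiency direction implicit as folklore, and you have supplied precisely that direction. The sketch of the reverse interpretation is sound, with the standard understanding that the encoding of $F_{\lambda,s}$ and its multiplication on the set $\lambda$ is fixed only after a choice of pairing function (equivalently, one existentially quantifies over a group operation on $\lambda$ making it free nilpotent of class $s$ on a generating set in bijection with $\lambda$), so that $\phi^{\#}$ is uniform in $\lambda$ rather than literally parameter-free.
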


\begin{proof}
By Theorem \ref{Interp-of-Aut-of-2-step}
and Lemma \ref{cM-Interprets-Set-Theory}.
\end{proof}

Let $N$ again denote a free nilpotent
group of class $2$ (recall that
$A$ stands for the abelianization
of $N$ and $\mathcal M$ is the multi-sorted
structure constructed over $A$).

We are going to estimate the logical
strength/complexity of the first-order theory of $\aut
N$ in the case, when $N$ is finitely generated.

\begin{Lem} \label{Z-in-cM}
Let $A$ be of rank at least $2.$ Then the structure $\mathcal M$
first-order interprets {\em(}with parameters{\em)} the ring of
integers $\Z.$
\end{Lem}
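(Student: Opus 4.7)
The plan is to exploit that $\mathcal M$ contains the full action of $\aut A$ on $A,$ together with the family $\cD$ of all direct summands, in order to encode the ring $\Z$ inside a rank-one direct summand of $A$ via a small list of parameters. Since $\rank(A) \ge 2,$ I would take as parameters two primitive elements $e_1,e_2 \in A$ whose span $B$ is a rank-two direct summand of $A,$ a direct complement $C \in \cD$ (so that $R(B,C)$ holds), a rank-one summand $D \in \cD$ with $e_1 \in D \subseteq B,$ and the automorphism $\tau \in \aut A$ that swaps $e_1$ with $e_2$ while fixing $C$ pointwise; all such objects manifestly exist under the assumption on $\rank(A).$ The domain of the interpretation will be the definable set $I = \{x \in A : x \in D\},$ and under the identification $n e_1 \leftrightarrow n$ the restriction of the addition of $A$ to $I$ is simply the ordinary addition on $\Z.$

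For multiplication I would use the shear automorphism attached to each $x \in I.$ For $x = n e_1$ there is a unique $\phi_x \in \aut A$ satisfying $\phi_x(e_1) = e_1,$ $\phi_x(e_2) = e_2 + x,$ and $\phi_x(c) = c$ for every $c \in C,$ namely the extension by the identity on $C$ of the shear $e_2 \mapsto e_2 + n e_1$ on $B.$ For $y = m e_1 \in I$ we then have $\tau(y) = m e_2,$ and hence
\[
\phi_x(\tau(y)) - \tau(y) = m(e_2 + n e_1) - m e_2 = (n m)\, e_1 \in I.
\]
Accordingly, I would define multiplication on $I$ by the first-order formula
\[
\mathrm{Mult}(x,y,z) \iff (\exists \phi \in \aut A)\,\bigl[\phi(e_1) = e_1 \wedge \phi(e_2) = e_2 + x \wedge (\forall c \in C)\,\phi(c) = c \wedge \phi(\tau(y)) = \tau(y) + z\bigr],
\]
which, under the identification $I \cong \Z,$ is exactly the graph of the ordinary multiplication.

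The only things that must be checked are the uniqueness of $\phi_x,$ which is immediate from the decomposition $A = \langle e_1 \rangle \oplus \langle e_2 \rangle \oplus C$ together with $\Z$-linearity, and the fact that every ingredient of the displayed formula---the group operation of $A,$ the evaluation $(\phi, v) \mapsto \phi(v),$ the membership predicate $c \in C,$ and the composition in $\aut A$---is already a basic or definable relation of $\mathcal M.$ Each of these is immediate from the list of sorts and basic relations of $\mathcal M,$ so I do not anticipate any genuine obstacle beyond a bookkeeping verification that the formula above is legitimately first-order in $\mathcal M.$
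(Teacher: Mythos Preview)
Your proposal is correct and follows essentially the same route as the paper: isolate a rank-two direct summand $B$ with complement $C$, and use the action of the resulting copy of $\gl{2,\Z}$ on $B$ (fixing $C$ pointwise) to recover the ring $\Z$. The paper simply appeals to the well-known interpretation of $\Z$ in the two-sorted structure $\langle \gl{2,\Z}, \Z^2\rangle$, whereas you spell that interpretation out explicitly via shears and a coordinate swap---a harmless difference in level of detail, not in strategy.
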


\begin{proof}
Let us consider two direct summands $B,C$ of
$A$ such that
$$
A = B\oplus C \text{ and } \rank B =2.
$$
Write $G$ for the group of all automorphisms of $A$
which preserve $B$ and point-wise fix $C.$ Clearly,
the structure $\str{G,B}$ with natural relations (that
is, with all relations on sorts along with the action
of $G$ on $B$) is isomorphic to the two-sorted
structure $\str{\gl{2,\Z},\Z^2}$ (taken in the same
language as one of $\str{G,B}$). It is a well-known
and simple result that the latter two-sorted structure
first-order interprets the ring of integers $\Z.$
\end{proof}

As an immediate corollary we have the following
fact.

\begin{Th} Suppose that $N$ is a finitely generated
free nilpotent group of class $\ge 2.$ Then the
first-order theory of the group $\aut N$ is unstable
and undecidable.
\end{Th}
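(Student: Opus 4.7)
The plan is to compose the interpretations already constructed in the preceding sections of the paper so that the ring of integers becomes first-order interpretable (with parameters) in $\aut N$; the conclusion will then follow from the classical fact that $\operatorname{Th}(\Z,+,\cdot)$ is undecidable and unstable.

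First I would apply Theorem \ref{Interp-of-Aut-of-2-step} to reduce to the case where $N$ is a finitely generated free nilpotent group of class exactly $2$. Since $N$ is assumed to have class $\ge 2$, its rank must be at least $2$, and the same holds for the abelianization $A = \overline{N}$. Under this reduction, the full machinery of Section~3 becomes available: in particular, $\aut N$ first-order interprets the multi-sorted structure $\mathcal M$ built over $A$, $\aut A$ and the family $\cD$ of direct summands of $A$.

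Next I would invoke Lemma \ref{Z-in-cM}, which supplies a parameter-interpretation of the ring $\Z$ inside $\mathcal M$ (the parameters being two direct summands $B,C$ with $A = B\oplus C$ and $\rank B = 2$). Composing the two interpretations yields a parameter-interpretation of $(\Z,+,\cdot)$ inside $\aut N$. Since the first-order theory of $(\Z,+,\cdot)$ is undecidable (G\"odel--Tarski) and unstable (it defines the natural order on $\N$, so it has the order property), both properties transfer to the first-order theory of $\aut N$: parameters cause no trouble, because undecidability and instability of $\operatorname{Th}(\aut N,\bar p)$ for some tuple $\bar p$ immediately imply the same for the parameter-free theory $\operatorname{Th}(\aut N)$.

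Since the theorem is nothing more than an assembly of previously established results, there is no genuine obstacle to overcome; the one mildly subtle point is the passage of undecidability and instability across a parameter-interpretation into the parameter-free theory, but this is standard model-theoretic folklore.
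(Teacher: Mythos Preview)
Your proposal is correct and matches the paper's approach exactly: the paper states the theorem as an immediate corollary of Lemma~\ref{Z-in-cM}, and your argument simply unpacks that corollary by composing Theorem~\ref{Interp-of-Aut-of-2-step}, the interpretation of $\mathcal M$ in $\aut N$, and Lemma~\ref{Z-in-cM}. Your additional remarks (that class $\ge 2$ forces rank $\ge 2$, and that undecidability and instability pass from a parameter-expansion down to the parameter-free theory) fill in details the paper leaves implicit.
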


\end{document}